\newtheorem{theorem}{Theorem}
\newtheorem{lemma}[theorem]{Lemma}
\newtheorem{proposition}[theorem]{Proposition}
\newtheorem{remark}[theorem]{Remark}
\def\CS{Cauchy-Schl\"omilch}
\def\PG{P{\'o}lya-Gamma}
\newcommand{\ben}{\begin{enumerate}}
\newcommand{\een}{\end{enumerate}}
\newcommand{\beq}{\begin{equation}}
\newcommand{\eeq}{\end{equation}}
\newcommand{\half}{\frac{1}{2}}
\newcommand{\vectornorm}[1]{\left|\left|#1\right|\right|}
\newcommand{\NormRV}{\mathcal{N}}
\newcommand{\InvGaussRV}{\mathcal{IG}}
\newcommand{\CauchyRV}{\mathcal{C}}
\newcommand{\GammaRV}{\mathcal{G}}
\newcommand{\UnifRV}{\mathcal{U}}
\begin{document}



\markboth{Bhadra, Datta, Polson, and Willard}{Global-Local Mixtures}

\title{Global-Local Mixtures}

\author{Anindya Bhadra  \footnote{email: bhadra@purdue.edu} \\ Purdue University
\and Jyotishka Datta  \footnote{email: jd298@stat.duke.edu}\\ University of Arkansas \\
\and Nicholas G. Polson \footnote{email: ngp@chicagobooth.edu} and Brandon Willard \footnote{email: brandonwillard@gmail.com} \\ The University of Chicago, Booth School of Business}

\maketitle

\begin{abstract}
  Global-local mixtures are derived from the \CS{} and Liouville integral transformation identities. We characterize well-known normal-scale mixture
  distributions including the Laplace or lasso, logit and quantile as well as new global-local mixtures. We also apply our methodology to convolutions that
  commonly arise in Bayesian inference. Finally, we conclude with a conjecture concerning bridge and uniform correlation mixtures. 
\end{abstract}

\noindent {\bf Keywords:}  
Bayes regularization; Cauchy; Convolution; Global-local mixture; Lasso;  Logistic; Quantile; Stable law. 

\section{Introduction}
Many statistical problems involve regularization penalties derived from
global-local mixture distributions \citep{polson_data_2011,
hans2011comment,bhadra2015horseshoe+}. A global-local mixture density, denoted
by $p(x_1, \ldots, x_p)$, takes the form 
\begin{equation*}
  p(x_1, \ldots, x_p) = \int_{0}^{\infty}\prod_{i=1}^{p} p(x_i \mid \tau) 
  p(\tau) d\tau, 
\end{equation*}
where $p(x_i \mid \tau) = \int_{0}^{\infty} p(x_i \mid \lambda_i, \tau) p(\lambda_i \mid \tau) d\lambda_i$ is a local mixture and $p(x_1, \ldots, x_p)$ is a global mixture over $\tau \sim p(\tau)$. There is great interest in analytically calculating $p(x_i \mid \tau)$, and the associated regularization penalty $\phi(x_i, \tau) = -\log p(x_i \mid \tau)$.  Convolution mixtures of the form $p(x_i \mid \tau) = \int p(x_i - \lambda_i) p(\lambda_i) d \lambda_i$ are also of interest. We show how the \CS{} and Liouville transformations can
be used to derive closed-form global-local mixtures.  We start by stating two key integral identities: 
the \CS{} transformation 
\begin{equation}
  \int_0^\infty f \left\{ ( a x - b x^{-1} )^2 \right\} d x 
  = \frac{1}{2a} \int_0^\infty f(y^2) d y
  \;, 
  \label{eq:identity}
\end{equation}
and the Liouville transformation
\begin{equation}
  \int_{0}^{\infty} f\left(ax + \frac{b}{x} \right) x^{-1/2}dx = a^{-1/2} \int_{0}^{\infty} f\left\{ 2 (ab)^{1/2} + y \right\} y^{-1/2} dy, \quad a, b >0
  \;. 
  \label{eq:liouville}
\end{equation}
See \citet{boros2006irresistible}, \citet{baker2008probabilistic} and \citet{jones_generating_2014} for further discussion. Identity \eqref{eq:identity} follows from the simple transformation $t = b/(a x)$ as
\begin{equation*}
  I = \int_{0}^{\infty} f \left\{(ax - b/x)^2 \right\} dx = \int_{0}^{\infty} f \left\{(at - b/t)^2 \right\} \frac{b}{a t^2} dt.
\end{equation*}
Adding the two terms in the last equality yields $2 I = \int_{0}^{\infty} f \left\{(at - b/t)^2 \right\} \left\{ 1+{b}/({a t^2}) \right\} dt$ and transforming $y = b/t - at$ gives $dy = -a \{1+{b}/({a t^2})\} dt$, yielding $I = (2a)^{-1} \int_{0}^{\infty} f(y^2) dy$, as required. A useful generalization of the \CS{} transformation is
\begin{equation}
  \int_0^\infty f\left[ \{x-s(x)\}^2 \right] dx =  \int_0^\infty f( y^2 ) dy, \label{eq:gen}
\end{equation}
where $s(x)=s^{-1}(x)$ is a self-inverse function such as $s(x) = b/x$ or $s(x) = -a^{-1}\log\{1-\exp(a x)\}$. The proof for the Liouville transformation identity follows in a
similar manner, and is omitted for the sake of brevity. These identities can be used to construct new global-local mixture distributions. 
Let $f(x) = 2g\{ t(x) \}$ and let $t(x)$ be of the form $x-s(x)$, where $s : \Re^+ \to \Re^+$ is a self-inverse, onto and monotone decreasing function. Together with the \CS{} transformation, we have a rather surprising way to represent the resulting $g\{t(x)\}$ as a global-local scale mixture. 

\citet{jones_generating_2014} shows that only a few choices of $t(x)$ leads to fully tractable formulae for its inverse $t^{-1}= \Pi$ and the integral 
$\Pi(y) = \int_{-\infty}^{y} \pi(\omega) d\omega$. Two special choices are the $t$-distribution with 2 degrees of freedom and the logistic. 
\begin{align*}
\Pi_{T}(y) = (1/2)\{ y+(4b+y^2)^{1/2}\}, \Pi_T^{-1}(x) = t_T(x) = x - b/x, \quad b >0,\\
\Pi_{L}(y) = a^{-1} \log(1+e^{ay}), \Pi_L^{-1}(x) = t_L(x) = a^{-1} \log(e^{ax}-1), \quad a>0.
\end{align*}

Now, the integral identity in \eqref{eq:identity} shows that if $f(x), \;x\geq 0$ is a density function, so is $g(x) = 2a f(|ax-b/x|), \; x > 0$.  The functions $f$ and $g$ are called mother and daughter density functions, respectively.  

Apart from simplifying proofs involving global-local mixtures, the \CS{} and Liouville transformations can generate new distributions via scale transformations. These transformations can take the form $f(x) = 2 g\{ t(x) \}$ for certain $f(x)$ under suitable conditions. For example, given a density $f(x)$ we can create a new global-local
scale family, $f(a x - b/x)$, by effectively reallocating its probability mass. A particularly useful tool for generating univariate and multivariate random
variables is Khintchine's theorem.  
Khintchine's theorem states that any random variable $X$ with a unimodal, univariate distribution and a mode at zero can be written as a product 
$X = Z U$, where $U \sim \UnifRV(0,1)$ and $Z$ has the density function $f_Z(z) = -z f^{\prime}_{X}(z), z \in \Re$. \citet{bryson1982constructing}, and subsequently \citet{jones2012khintchine}, discuss how Khintchine's theorem allows one to construct both univariate and multivariate densities, even with special dependence structure. \cite{jones_generating_2014} develops an extended Khintchine's theorem that further allows one to generate random variables with unimodal densities of the
form $2 g\{t(x)\}$.

\section{Global-local Scale Mixtures}
\label{sec:gls_mixes}
\subsection{Lasso as a normal scale mixture}
The lasso penalty arises as a Laplace global-local mixture
\citep{andrews_scale_1974}.  A simple transformation proof follows using \CS{}
with $f(x) = e^{-x}$.  Starting with the normal integral identity, 
$\int_{0}^{\infty} f(y^2) dy = \int_0^\infty e^{-y^2} dy = \pi^{1/2}/2 $, we
obtain
\[
\int_0^\infty e^{-(a x)^2 - (b/x)^2} d x = \int_0^{\infty} 
\exp\left\{-a b \left(\frac{a}{b} x^2 + \frac{b}{a} x^{-2} \right)\right\} dx 
= \frac{\pi^{1/2}}{2a} e^{-2 a b}
\quad a,b \in \Re.
\]
Substituting $t = (a/b)^{1/2} x$ and $c = ab$ yields the Laplace or Lasso penalty as
\begin{align*}
  \int_0^\infty e^{- c (t - t^{-1})^2} dt 
  &= \half (\pi/c)^{1/2} 
  \Rightarrow \int_0^\infty e^{- c (t^2 + t^{-2})} dt 
  = \half (\pi/c)^{1/2} e^{-2c}
  \;. 
\end{align*}
The Laplace density can be viewed as a transformed normal, via $y = t - t^{-1}$.

\begin{proposition}
The usual identity for the lasso also follows from \citet{levy1940certains} as
\begin{equation}
  \int_{0}^{\infty} \frac{a}{(2 \pi)^{1/2} t^{3/2}} e^{-{a^2}/({2 t})} e^{-\lambda t} dt = e^{-a (2 \lambda)^{1/2} } \;.\label{eq:levy}
\end{equation}
For $a = 1$, and $\theta = (2 \lambda)^{1/2}$, this can be written as 
\begin{equation}
  E [\exp\{-\theta^2/(2G)\}] = \exp(-\theta),\quad G \sim \GammaRV(1/2, 1/2) 
  \label{eq:gamma}
\end{equation}
\end{proposition}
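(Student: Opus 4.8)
The plan is to evaluate the left-hand side of \eqref{eq:levy} as a single Gaussian-type integral, reusing the \CS{} evaluation $\int_0^\infty \exp\{-(ax)^2 - (b/x)^2\}\,dx = (\pi^{1/2}/2a)\,e^{-2ab}$ already established in the lasso computation, and then to deduce \eqref{eq:gamma} from \eqref{eq:levy} by a reciprocal substitution that carries the stable-law density on the left onto a Gamma density. The whole argument is therefore a change of variables plus one invocation of \CS{}.

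First I would remove the awkward half-integer power $t^{-3/2}$ by the substitution $t = 1/w^2$ (equivalently $w = t^{-1/2}$). A short computation gives $t^{-3/2}\,dt = -2\,dw$ and $-a^2/(2t) - \lambda t = -(a^2/2)w^2 - \lambda/w^2$, so the integral collapses to $(2a/(2\pi)^{1/2})\int_0^\infty \exp\{-(a^2/2)w^2 - \lambda/w^2\}\,dw$, with the $t^{-3/2}$ weight entirely absorbed into the constant and the Jacobian. The point of this step is that the exponent is now exactly the symmetric quadratic form to which \CS{} applies.

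Next I would apply the Gaussian identity with $\alpha = a/2^{1/2}$ and $\beta = \lambda^{1/2}$ (so that $\alpha^2 = a^2/2$ and $\beta^2 = \lambda$), giving $\int_0^\infty \exp\{-\alpha^2 w^2 - \beta^2/w^2\}\,dw = (\pi^{1/2}/2\alpha)\,e^{-2\alpha\beta}$. Here $2\alpha\beta = a(2\lambda)^{1/2}$ produces precisely the claimed exponent, and the remaining prefactor $(2a/(2\pi)^{1/2})\cdot(\pi^{1/2}/2\alpha)$ should simplify to $1$; checking that these constants cancel exactly is the one place where I expect to have to be careful, since a stray multiplicative $2^{1/2}$ anywhere in the substitution would spoil the normalization. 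This establishes \eqref{eq:levy}.

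Finally, for \eqref{eq:gamma} I would set $a = 1$ in \eqref{eq:levy} and substitute $t = 1/g$. Under this reciprocal map $t^{-3/2}\,dt = -g^{-1/2}\,dg$, so the integrand becomes $(2\pi)^{-1/2} g^{-1/2} e^{-g/2} \cdot e^{-\lambda/g}$; the first factor is exactly the $\GammaRV(1/2,1/2)$ density (using $\Gamma(1/2) = \pi^{1/2}$), and writing $\lambda = \theta^2/2$ turns $e^{-\lambda/g}$ into $\exp\{-\theta^2/(2g)\}$ while the right-hand side $e^{-a(2\lambda)^{1/2}} = e^{-(2\lambda)^{1/2}}$ becomes $e^{-\theta}$. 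Recognizing the transformed integral as $E[\exp\{-\theta^2/(2G)\}]$ then yields \eqref{eq:gamma}. The only subtlety here is confirming that the reciprocal substitution carries the weight $t^{-3/2}$ onto the Gamma weight $g^{-1/2}$ with the correct Jacobian, which is immediate once the powers of $g$ are tracked.
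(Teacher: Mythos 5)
Your argument is correct and is essentially the paper's own proof: the substitution $w = t^{-1/2}$ is exactly the paper's change of variables $t^{-1} = x^{2}$, your invocation of the prepackaged Gaussian identity $\int_0^\infty e^{-(ax)^2-(b/x)^2}\,dx = (\pi^{1/2}/2a)e^{-2ab}$ is just the \CS{} step with the square already completed, and the reciprocal substitution $t = 1/g$ recovering the $\GammaRV(1/2,1/2)$ weight is identical to the paper's $t = x^{-1}$. All constants check out as you anticipated, so nothing further is needed.
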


\begin{proof}
First substitute $t^{-1} = x^2$, which makes the left hand side in
\eqref{eq:levy} equal to 
\[
  \int_{0}^{\infty} \frac{a}{(2 \pi)^{1/2} t^{3/2}} e^{-{a^2}/({2 t})} e^{-\lambda t} dt = \left(\frac{2}{\pi}\right)^{1/2}ae^{-a (2 \lambda)^{1/2}} 
  \int_0^{\infty} e^{-({2}^{-1/2} ax - \lambda x^{-1})^2} dx = e^{-a (2 \lambda)^{1/2}}
  \;.
\]
The last step follows from \CS{} formula.  The second relationship \eqref{eq:gamma} follows by fixing $a = 1$, $\theta = (2\lambda)^{1/2}$ and
substituting $t = x^{-1}$
\[
\int_{0}^{\infty} \frac{a}{(2 \pi)^{1/2} t^{3/2}} 
e^{-{a^2}/({2 t})} e^{-\lambda t} dt 
= \frac{1}{(2 \pi)^{1/2}} \int_{0}^{\infty} e^{-{\theta^2}/({2x})} 
x^{-1/2} e^{-x/2} dx.
\]
The left hand side can be identified as 
$E\left\{e^{-\theta^2 / (2 G) } \right\}$ for 
$G \sim \GammaRV(1/2, 1/2)$. 
\end{proof}

\subsection{Logit and quantile as global-local mixtures}
Logistic modeling can be viewed within the global-local mixture framework via the \PG{} distribution \citep{polson_bayesian_2013}. This leads to efficient Markov chain Monte Carlo algorithms for inference. 
\begin{proposition}
The two key marginal distributions for the hyperbolic generalized inverse Gaussian \citep{barndorff1982normal} and \PG{} mixtures are
\begin{align}
 \frac{\alpha^2 - \kappa^2}{2\alpha} e^{-\alpha|x-\mu| + \kappa (x-\mu)} &= \int_0^{\infty} \phi(x \mid \mu + \kappa \lambda, \lambda) p_{\mathrm{GIG}}\left\{ \lambda \mid 1,0, (\alpha^2 - \kappa^2)^{1/2}\right\} d\lambda, \; \alpha \geq \kappa \geq 0, \label{eq:GIG}\\
\frac{1}{B(\alpha,\kappa)} \frac{e^{\alpha (x-\mu)}}{(1+e^{x-\mu})^{\alpha + \kappa}}&= \int_0^{\infty} \phi(x \mid \mu + \kappa \lambda, \lambda)p_{\mathrm{Polya}}(\lambda \mid \alpha,\kappa)  d\lambda\;, \label{eq:polya}
\end{align}
where $\phi(\mu + \kappa \lambda, \lambda)$ denotes the normal density function with mean $(\mu + \kappa \lambda)$ and variance $\lambda$.  The functions $p_{\mathrm{GIG}}$ and $p_{\mathrm{Polya}}$ are the corresponding local mixture densities for the generalized inverse Gaussian and the \PG{}, respectively. The logit and quantile identities can be derived using \CS{} identity. 
\end{proposition}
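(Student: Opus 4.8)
The plan is to read both identities as normal variance–mean mixtures and to strip off the Gaussian mean-tilt before calling on \CS{}. Writing $z = x-\mu$ and completing the square in the exponent of $\phi(x\mid\mu+\kappa\lambda,\lambda)$, I would first record the decomposition
\begin{equation*}
  \phi(x\mid\mu+\kappa\lambda,\lambda) = \frac{1}{(2\pi\lambda)^{1/2}}\,e^{-z^2/(2\lambda)}\,e^{\kappa z}\,e^{-\kappa^2\lambda/2},
\end{equation*}
so that $e^{\kappa z}$ pulls out of the $\lambda$-integral and is matched against the exponential tilt on the left of both \eqref{eq:GIG} and \eqref{eq:polya}, while $e^{-\kappa^2\lambda/2}$ is folded into the mixing density. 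Each claim then reduces to showing that a \emph{symmetric} Gaussian scale mixture $\int_0^\infty (2\pi\lambda)^{-1/2}e^{-z^2/(2\lambda)}\,\tilde p(\lambda)\,d\lambda$ reproduces the even part of the target density, which is exactly the situation \CS{} is built to handle.

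For the generalized-inverse-Gaussian identity \eqref{eq:GIG} I would next observe that $p_{\mathrm{GIG}}\{\lambda\mid 1,0,(\alpha^2-\kappa^2)^{1/2}\}$ collapses to an exponential (Gamma) density in $\lambda$ with rate $(\alpha^2-\kappa^2)/2$. After absorbing $e^{-\kappa^2\lambda/2}$, the surviving integral is precisely $\int_0^\infty \lambda^{-1/2}e^{-A/\lambda - B\lambda}\,d\lambda$ with $A = z^2/2$ and $B = \alpha^2/2$, i.e.\ the integral already evaluated in the lasso derivation above: the substitution $t\propto\lambda^{1/2}$ brings it to the shape $\int_0^\infty e^{-c(t-t^{-1})^2}\,dt$, and \CS{} returns $(\pi/B)^{1/2}e^{-2(AB)^{1/2}}\propto e^{-\alpha|z|}$ since $2(AB)^{1/2}=\alpha|z|$. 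Reinstating $e^{\kappa z}$ and the Gamma normalizer recovers the asymmetric Laplace $\tfrac{\alpha^2-\kappa^2}{2\alpha}e^{-\alpha|z|+\kappa z}$ that serves as the quantile-regression likelihood; this case is just the lasso computation carried through with the mean-tilt retained.

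The logistic identity \eqref{eq:polya} follows the same template but is the genuinely harder one. Here the even part of the target density is, up to the constant $B(\alpha,\kappa)$, the power $\{2\cosh(z/2)\}^{-(\alpha+\kappa)}$, so the task becomes showing that $\operatorname{sech}^{\,\alpha+\kappa}(z/2)$ is a symmetric normal scale mixture and that the associated mixing law is exactly $p_{\mathrm{Polya}}(\lambda\mid\alpha,\kappa)$. The route I would take is to expand the hyperbolic-secant power through the characterization of the \PG{} law as an infinite convolution of Gammas, apply \CS{} to each Gaussian–Gamma block exactly as for \eqref{eq:GIG}, and then resum. The main obstacle is precisely this last stage: one must justify interchanging the \PG{} mixing integral (equivalently the infinite sum) with the Gaussian integral so that \CS{} can act term by term, and then recognize the resummed series as $\cosh^{-(\alpha+\kappa)}(z/2)$ rather than as an opaque Bessel- or theta-type expression.

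A secondary but essential bookkeeping check is that the mean-tilt $e^{\kappa z}$ supplied by the normal kernel agrees with the exponential tilt inherent in the left-hand density of \eqref{eq:polya}; since the symmetric scale-mixture factor is necessarily even in $z$, matching the two tilts together with the even factor is what pins down the normalizing constant $B(\alpha,\kappa)$ and the admissible relationship between the parameters. Once that consistency is verified, the closed form on the left of \eqref{eq:polya} drops out of the same \CS{} evaluation used for the quantile case, so that the logit and quantile representations are, at bottom, two instances of a single transformation argument.
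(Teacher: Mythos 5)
Your treatment of the generalized inverse Gaussian identity \eqref{eq:GIG} is correct and is essentially the paper's own argument run in the opposite direction: the paper first uses \CS{} with $f(x)=e^{-x^2/2}$, $a=\alpha$, $b=|x-\mu|$ to exhibit $\alpha^{-1}e^{-\alpha|x-\mu|}$ as a symmetric normal scale mixture against $e^{-\alpha^2\nu/2}$, then multiplies by $\tfrac{1}{2}(\alpha^2-\kappa^2)e^{\kappa(x-\mu)}$ and completes the square; you strip the tilt first and then invoke the same \CS{} evaluation of $\int_0^\infty \lambda^{-1/2}e^{-A/\lambda-B\lambda}\,d\lambda$. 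The constants work out as you claim, so this half is fine.

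The \PG{} identity \eqref{eq:polya} is where there is a genuine gap, and it sits exactly where you flag ``the main obstacle.'' Your proposed route --- decompose the \PG{} law as an infinite convolution of Gammas and ``apply \CS{} to each Gaussian--Gamma block, then resum'' --- does not lead anywhere \CS{} is needed or a series appears: integrating the Gaussian kernel against an infinite convolution of Gammas produces an infinite \emph{product} of Gamma Laplace transforms, $\prod_k\{1+\psi^2/(4\pi^2(k-1/2)^2)\}^{-b}$, which one closes via the Weierstrass product for $\cosh$, not by a term-by-term \CS{} computation followed by resummation. The paper's actual proof uses a different decomposition: it writes out the explicit alternating-series density of $\operatorname{PG}(1,0)$, whose $n$-th term is an inverse-Gaussian/L\'evy kernel proportional to $\omega^{-3/2}\exp\{-(2n+1)^2/(8\omega)\}$; after the substitution $\omega=t^{-2}$, \CS{} applies to each term $\int_0^\infty\exp[-\tfrac{1}{2}\{(2n+1)t/2-\psi/t\}^2]\,dt$ and yields $e^{-(n+1)\psi}$ up to constants, so the mixture collapses to the geometric series $\sum_n(-1)^ne^{-(n+1)\psi}=\{1+e^\psi\}^{-1}$. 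That term-by-term \CS{} evaluation of the theta-type series is the entire content of \eqref{eq:logit}, and your plan neither identifies it nor carries out the interchange you correctly note must be justified. Two further points you should reconcile: the paper only proves the logit special case $a=0$, $b=1$ of \eqref{eq:pg}, not the general $(\alpha,\kappa)$ statement in \eqref{eq:polya}; and your tilt-matching check, if actually performed, shows that the even part of the left side of \eqref{eq:polya} carries the tilt $e^{(\alpha-\kappa)(x-\mu)/2}$ rather than $e^{\kappa(x-\mu)}$, i.e.\ the $\kappa$ in the normal mean must be read as the \PG{} parameter $a-b/2$ of \eqref{eq:pg} --- a consistency issue your outline raises but does not resolve.
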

\begin{proof}
Let $f(x) = e^{-x^2/2}$, $a = \alpha$ and $b = |x-\phi|$ in \eqref{eq:identity}. Then,
\[
(2/\pi)^{1/2} \int_{0}^{\infty} 
\exp\left\{-\half \left(\alpha y - \frac{|x-\mu|}{y} \right)^2 \right\} dy 
= \frac{1}{\alpha} (2\pi)^{-1/2} \int_0^{\infty} e^{-\half y^2} dy 
= \frac{1}{\alpha}
\;.
\]
Let $\nu = y^2$. Rearranging the constant terms yields
\[
\frac{1}{\alpha} e^{-\alpha|x-\mu|} = \frac{1}{(2 \pi \nu)^{1/2}} \int_{0}^{\infty} \exp\left[-\left\{ \frac{(x-\mu)^2}{2\nu} + \frac{\alpha^2}{2} \nu \right\} \right]
d\nu
\;.
\]
Multiplying by $2^{-1}(\alpha^2-\kappa^2) e^{\kappa(x-\mu)}$ and completing the square yields
\begin{equation*}
  \frac{\alpha^2-\kappa^2}{2\alpha} \exp\left\{-\alpha|x-\mu| + \kappa(x-\mu)\right\} 
  = \int_0^{\infty} \phi(x \mid \mu + \kappa \nu, \nu) 
  \frac{\alpha^2-\kappa^2}{2} \exp\left(-\frac{\alpha^2-\kappa^2}{2} \nu \right) d \nu. 
\end{equation*}
The mixing distribution is exponential with rate parameter $(\alpha^2-\kappa^2)/2$, a special case of the generalized inverse Gaussian distribution introduced by Etienne Halphen circa 1941
\citep{seshadri1997halphen}.  The density with parameters $(\lambda, \delta, \gamma)$ 
has the form 
\begin{equation*}
  p_{\mathrm{GIG}}(x \mid \lambda, \delta, \gamma) = \frac{(\gamma/\delta)^{\lambda}}{2 K_{\lambda}(\delta \gamma)} x^{\lambda-1} 
  \exp\left\{ -\half (\delta^2 x^{-1} + \gamma^2 x )\right\}, \quad x, \lambda, \delta > 0,\;  p \in \Re
  \;,
\end{equation*}
where $K_{\lambda}$ is the modified Bessel function of the second kind.  The Liouville formula can be used to show that the above is a valid probability density
function.  When $\delta$ or $\gamma$ is zero, the normalizing constant takes the limiting values given by $K_{\lambda}(u) \asymp \Gamma(|\lambda|) 2^{|\lambda|-1} u^{|\lambda|}$ 
for $\lambda > 0$.  If $\delta=0$, the generalized inverse Gaussian is identical to a gamma distribution:
\[
p_{\mathrm{GIG}}(x \mid \lambda, \delta = 0 , \gamma) 
= \frac{\alpha^{\lambda}}{\Gamma(\lambda)} x^{\lambda-1} \exp(-\alpha x), 
\quad x > 0,\; \alpha = \gamma^2 / 2.
\]
%
We now present a simple proof for the \PG{} mixture in \eqref{eq:polya}. 
First, write $\kappa$ for $a-b/2$: 
\begin{equation}
  \frac{(e^{\psi})^a}{(1+e^{\psi})^b} = 2^{-b} e^{\kappa \omega} 
  \int_0^{\infty} e^{-\omega \psi^2/2} p(\omega) d\omega
  \;, 
  \label{eq:pg}
\end{equation}
where $\omega \sim \operatorname{PG}(b,0)$, a \PG{} random variable with density
$$
p(\omega \mid b, 0) = \frac{2^{b-1}}{\Gamma(b)} 
\sum_{n=0}^{\infty} (-1)^n \frac{\Gamma(n+b)}{\Gamma(n+1)} 
\frac{2n + b}{(2 \pi)^{1/2} \omega^{3/2}} 
\exp\left\{-\frac{(2 n + b)^2}{8 \omega} \right\}.
$$
The logit function corresponds to $a=0,b=1$ in \eqref{eq:pg}. \CS{} identity yields
\begin{equation}
  \frac{1}{1+e^{\psi}} = \half e^{- \psi/2} \int_0^{\infty} e^{-(\psi^2\omega)/2} p(\omega) d\omega,
  \quad p(\omega) = \sum_{n=0}^{\infty} (-1)^n \frac{2n+1}{ (2 \pi \omega^3)^{1/2}} 
  e^{-(2n+1)^2/(8 \omega)}
  \label{eq:logit}
  \;.
\end{equation}
To show \eqref{eq:logit}, write the right-hand side interchanging the integral and summation:
\begin{align*}
  I & = \half e^{-\psi/2} \sum_{n=0}^{\infty} (-1)^n \frac{2n+1}{(2 \pi)^{1/2}} 
  \int_0^{\infty} 
  \exp\left[-\left\{ \frac{\psi^2}{2} \omega + \frac{(2n+1)^2}{8 \omega} \right\} \right] \frac{1}{\omega^{3/2}} d\omega
  \;. 
\end{align*}
Using the change of variable $\omega = t^{-2}$ gives
\begin{align*}
  I & = \sum_{n=0}^{\infty} (-1)^n e^{-(n+1)\psi} 
  \frac{2n + 1}{(2 \pi)^{1/2}} 
  \left( \int_{0}^{\infty} 
    \exp\left[-\half \left\{ \frac{(2n+1)t}{2} - \frac{\psi}{t}\right\}^2 \right] dt 
  \right)
  \;.
\end{align*}
Applying the \CS{} identity to the inner integral yields 
\[
\int_{0}^{\infty} 
\exp\left[-\half \left\{ \frac{(2n+1)t}{2} - \frac{\psi}{t}\right \}^2 \right] dt 
= \int_0^{\infty} \frac{e^{-y^2/2}}{2n+1} dy= \frac{(2\pi)^{1/2}}{2n+1}
\;,
\]
which implies $I = \sum_{n=0}^{\infty} (-1)^n \exp\{-(n+1) \psi\} = \{1+\exp(\psi)\}^{-1}$. 
\end{proof}

\begin{remark}
When $\alpha = \kappa$, we have the limiting result $(\alpha^2-\kappa^2)^{-1} p_{\mathrm{GIG}}\{1,0, (\alpha^2-\kappa^2)^{1/2} \} = 1,$
or equivalently in terms of densities, with a marginal improper uniform prior, $p(\lambda) = 1$,
\begin{equation}
  \int_{0}^{\infty} \phi(b \mid -a\lambda, c\lambda) d\lambda = a^{-1} \exp\left\{-2 \max(ab/c,0)\right\}
  \;. 
  \label{eq:svm}
\end{equation}
This pseudo-likelihood represents support vector machines as a global-local mixture. The identity for quantile regression, which is a limiting case of the above identities by applying Fatou-Lebesgue theorem, is the following: 
\[
c^{-1}\exp\{ 2c^{-1} \rho_q(b) \}= \int_{0}^{\infty} \phi( b \mid \lambda - 2\tau \lambda, c \lambda){\rm e}^{-2\tau(1-\tau)\lambda} d\lambda, \quad c, \tau > 0,
\]
where $\rho_q(b) = \rvert b \lvert / 2 + (q-1/2) b$ is the check-loss function \citep{polson_data_2013}.
\end{remark}

\citet{polson_data_2011} derive this as a direct consequence of the lasso identity 
\[
\int_0^{\infty} p/(2 \pi \lambda)^{1/2} \exp\left\{-\left(p^2 \lambda+q^2 \lambda^{-1}\right)/2\right\} d\lambda = e^{-\lvert pq \rvert}.
\]
Applying the Liouville identity yields
\[
\int_{0}^{\infty} f\left(ax + \frac{b}{x} \right) x^{-1/2} dx = a^{-1/2} \int_{0}^{\infty} f\left\{ 2 (ab)^{1/2} + y \right\} y^{-1/2} dy, \quad a, b > 0.
\]
Setting $f(x) = e^{-x}$, $a = p^2/2$, and $b = q^2/2$ we get
\begin{align*}
  \int_0^{\infty} \frac{e^{-(p^2 \lambda + q^2 \lambda^{-1})/2}}{\lambda^{1/2}} d\lambda
  & = \frac{2^{1/2}}{p} \int_0^{\infty} e^{-|pq| + y} y^{-1/2} d y \\
  & = \frac{2^{1/2} e^{-|pq|}}{p} \int_0^{\infty} e^{-y} y^{-1/2} d y 
  = \frac{(2\pi)^{1/2} e^{-|pq|}}{p}
  \;.
\end{align*}

\citet{hans2011comment} shows that the elastic-net regression can be recast as a
global-local mixture with a mixing density belonging to the orthant-normal
family of distributions.  The orthant-normal prior on a single regression
coefficient, $\beta$, given hyper-parameters $\lambda_1$ and $\lambda_2$, 
has a density function with the following form:
\begin{equation}
  p(\beta \mid \lambda_1, \lambda_2)  = 
  \begin{cases} 
   \phi(\beta \mid \frac{\lambda_1}{2\lambda_2}, \frac{\sigma^2}{\lambda_2}) 
   / 2\Phi\left(-\frac{\lambda_1}{2\sigma \lambda_2^{1/2} }\right), & \quad \beta < 0, 
   \\
   \phi(\beta \mid \frac{-\lambda_1}{2\lambda_2}, \frac{\sigma^2}{\lambda_2}) / 
   2\Phi\left(-\frac{\lambda_1}{2\sigma \lambda_2^{1/2} }\right), & \quad \beta \geq 0.
  \end{cases} 
  \;
  \label{eq:hans}
\end{equation}

\section{Convolution mixtures}
\label{sec:convolutions}

Another interesting area of application is convolution mixtures and marginal densities for location-scale mixture problems. We show that the Cauchy
convolution \citep{pillai2015unexpected} and inverse-gamma convolution can be derived similarly \citep{polson_halfcauchy_2012}. \citet{bhadra_default_2016} shows that the regularly varying tails of half-Cauchy priors work well for low-dimensional functions of normal
vector mean, where flat priors give poorly calibrated inference. 
\begin{lemma}
  Let $X_i \sim \CauchyRV(0,1)$ $(i = 1, 2)$ be Cauchy distributed random variates, then $Z = w_1 X_1 + w_2 X_2 \sim \CauchyRV( 0, w_1 + w_2).$ where $w_1,w_2 > 0$.
\end{lemma}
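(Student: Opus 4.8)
The plan is to avoid the two-fold convolution integral entirely and instead work with characteristic functions, recycling the gamma--exponential identity \eqref{eq:gamma} already proved above from \citet{levy1940certains}. The engine is the standard fact that a unit Cauchy variate is a normal scale mixture: writing $X_i = Z_i / G_i^{1/2}$ with $Z_i \sim \NormRV(0,1)$ independent of $G_i \sim \GammaRV(1/2,1/2)$, one gets $X_i \sim \CauchyRV(0,1)$, since this is exactly the Student-$t$ representation on one degree of freedom (recall $\chi_1^2 = \GammaRV(1/2,1/2)$, matching the mixing density $(2\pi)^{-1/2} x^{-1/2}e^{-x/2}$ used in the proof of \eqref{eq:gamma}). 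First I would verify this representation, then compute the characteristic function by conditioning on $G_i$ and integrating out the Gaussian layer, giving $E[e^{\mathrm{i} t X_i}] = E[\exp\{-t^2/(2 G_i)\}]$. The key observation is that the inner expectation is precisely the left-hand side of \eqref{eq:gamma} with $\theta = |t|$, which collapses to $e^{-|t|}$; hence the standard Cauchy has characteristic function $\exp(-|t|)$ with no contour integration needed.

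Next I would rescale. Since $w_j > 0$, the substitution $t \mapsto w_j t$ yields $E[e^{\mathrm{i} t\, w_j X_j}] = \exp(-w_j |t|)$, i.e.\ $w_j X_j \sim \CauchyRV(0, w_j)$. Assuming the two summands independent (this is what the statement intends, and what makes the cited convex-combination phenomenon of \citet{pillai2015unexpected} meaningful), the characteristic function of $Z$ factorizes, so
\[
  E\!\left[ e^{\mathrm{i} t Z} \right] = \exp(-w_1 |t|)\,\exp(-w_2 |t|) = \exp\{-(w_1+w_2)|t|\},
\]
which is exactly the characteristic function of $\CauchyRV(0, w_1 + w_2)$. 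Uniqueness of characteristic functions then delivers the claim.

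The only genuine work sits in the first step: establishing the scale-mixture representation and justifying the interchange of the conditional expectation, after which the whole argument reduces to the previously proved identity \eqref{eq:gamma}. I expect that identification, rather than any new analysis, to carry the proof, so the hard part is conceptual (recognizing the Gaussian characteristic-function integrand as the Lévy kernel) rather than computational. A self-contained alternative would convolve the scaled densities directly, evaluating $\frac{w_1 w_2}{\pi^2}\int_{-\infty}^{\infty} \{(w_1^2 + (z-y)^2)(w_2^2 + y^2)\}^{-1}\,dy$ by partial fractions; there the main obstacle becomes the bookkeeping of the arctangent and logarithm cancellations, which is exactly the drudgery the characteristic-function route is designed to sidestep.
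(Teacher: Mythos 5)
Your proof is correct, but it takes a genuinely different route from the paper's. The paper proves this lemma by brute-force convolution of the two Cauchy densities: it exploits symmetry, applies the Lagrange identity $(a^2+b^2)(c^2+d^2)=(ac+bd)^2+(ad-bc)^2$ to rewrite the denominator, substitutes $y=x^{-1}$, and then invokes the \CS{} transformation \eqref{eq:identity} to collapse the integral --- the whole point being to exhibit the convolution as yet another application of that transformation. You instead go through characteristic functions, deriving $E[e^{\mathrm{i}tX}]=e^{-|t|}$ from the normal scale-mixture (Student-$t_1$) representation and the previously proved identity \eqref{eq:gamma} with $\theta=|t|$, then multiplying the factors. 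The paper actually anticipates your route in the remark immediately following its proof (``One can also use the characteristic function \ldots\ to derive the result in just one step''), though it simply quotes the Cauchy characteristic function rather than deriving it; your added value is that you obtain $e^{-|t|}$ from \eqref{eq:gamma}, which was itself proved via \CS{}, so the transformation still enters, just one level removed. What each approach buys: yours is shorter, generalizes immediately to $m$ summands and to stable laws, and avoids the algebraic bookkeeping of completing the square in the quartic denominator; the paper's keeps everything at the density level and demonstrates the \CS{} identity doing real work on a nontrivial integral, which is the pedagogical goal of Section~\ref{sec:convolutions}. Two small points: you are right to flag that independence of $X_1,X_2$ must be assumed (the lemma statement omits it, and the paper's convolution formula assumes it silently), and the interchange of expectations you mention is justified by Tonelli/dominated convergence since the Gaussian kernel is bounded by $1$.
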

\begin{lemma}
  Let $X_i \sim \InvGaussRV(\alpha t_i, \alpha t_i^2)$ $(i = 1, 2)$, then $Z = X_1 + X_2 \sim \InvGaussRV\{\alpha (t_1 + t_2), \alpha (t_1^2+t_2^2)\},$ where $\alpha, t_1, t_2 \geq 0$, and $\InvGaussRV(\alpha t, \alpha t^2)$ is an inverse-Gaussian random variable with density
\[
    f(x) = \frac{t \alpha^{1/2} e^t}{(2 \pi)^{1/2} x^{3/2}} 
    \exp\left( -\frac{\alpha t^2}{2x} - \frac{x}{2\alpha} \right), \quad x \geq 0.
\]
\end{lemma}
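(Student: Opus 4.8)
The plan is to pass to Laplace transforms, where the additive structure becomes transparent, and to use the L\'evy identity \eqref{eq:levy} (itself a consequence of \CS) to evaluate the transform of a single factor in closed form. First I would observe that the density of $X_i \sim \InvGaussRV(\alpha t_i, \alpha t_i^2)$ is an exponentially tilted one-sided stable-$1/2$ (L\'evy) density: writing it as $e^{t_i}\,\ell_{a_i}(x)\,e^{-x/(2\alpha)}$ with $a_i = t_i\sqrt{\alpha}$ and $\ell_a(x) = a(2\pi)^{-1/2}x^{-3/2}e^{-a^2/(2x)}$, the tilt $e^{-x/(2\alpha)}$ only shifts the Laplace argument, so the additive property should reduce to stability of the untilted L\'evy densities.

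Then I would compute $E[e^{-sX_i}]$ directly by folding the tilt into the rate: up to the constant $e^{t_i}$ the integral is $\int_0^\infty \ell_{a_i}(x)\,e^{-(s+1/(2\alpha))x}\,dx$, and \eqref{eq:levy} with $a = a_i$ and $\lambda = s+1/(2\alpha)$ evaluates this to $e^{-a_i\sqrt{2s+1/\alpha}}$. Collecting constants gives the clean cumulant form $E[e^{-sX_i}] = \exp\{t_i(1 - \sqrt{1+2\alpha s})\}$, which correctly returns $1$ at $s=0$, confirming the normalization claimed for the density.

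The third step uses independence: the transform of $Z = X_1 + X_2$ factorizes into the product, giving $E[e^{-sZ}] = \exp\{(t_1+t_2)(1 - \sqrt{1+2\alpha s})\}$. This is precisely the single-factor formula with $t_i$ replaced by $t_1+t_2$ at the same $\alpha$; comparing against the displayed transform and invoking uniqueness of the Laplace transform for distributions on $[0,\infty)$ identifies $Z$ as the inverse-Gaussian law with location parameter $\alpha(t_1+t_2)$, matching the asserted $\InvGaussRV$ form.

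The only genuinely delicate points are (i) the algebra of folding the tilt rate $1/(2\alpha)$ and the transform variable $s$ under a single square root, where $\sqrt{\alpha}\sqrt{2s+1/\alpha}=\sqrt{1+2\alpha s}$, and (ii) reading off the scale parameter of $Z$ from its transform and checking it against the claimed form. A more elementary but messier alternative is the direct convolution $f_Z(z)=\int_0^z f_1(x)f_2(z-x)\,dx$: there the tilts combine cleanly to $e^{-z/(2\alpha)}$, but one is then left with $\int_0^z x^{-3/2}(z-x)^{-3/2}\exp\{-a_1^2/(2x) - a_2^2/(2(z-x))\}\,dx$, whose evaluation is the real work and would require a symmetrizing substitution before \CS{} can be applied. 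This is why I expect the transform route to be the cleaner path to the conclusion.
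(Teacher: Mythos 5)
Your transform argument is correct, and it is a genuinely different route from the one the paper takes: the paper does not actually write out a proof of this lemma, but instead asserts that it, like the Cauchy case, "follows from a straightforward application" of the \CS{} identity \eqref{eq:identity}, meaning a direct convolution $\int_0^z f_1(x)f_2(z-x)\,dx$ massaged into the form $f\{(ax-b/x)^2\}$ --- exactly the messier alternative you describe in your last paragraph. Your route instead factors the density as an exponentially tilted L\'evy density, evaluates $E[e^{-sX_i}]=\exp\{t_i(1-\sqrt{1+2\alpha s})\}$ via the identity \eqref{eq:levy} (which the paper has already derived from \CS{}, so your proof still rests on the same engine), and multiplies transforms. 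What this buys is a shorter computation, an automatic check of the normalizing constant at $s=0$, and an induction-free extension to $n$ summands; what it costs is reliance on injectivity of the Laplace transform on distributions supported in $[0,\infty)$, which you correctly invoke but should cite as a known fact rather than leave implicit.

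One point you flag as "delicate" but do not resolve, and must: reading off the second parameter. Your computation gives $E[e^{-sZ}]=\exp\{(t_1+t_2)(1-\sqrt{1+2\alpha s})\}$, which is the single-factor transform with $t$ replaced by $t_1+t_2$ and the same $\alpha$, hence $Z\sim\InvGaussRV\{\alpha(t_1+t_2),\,\alpha(t_1+t_2)^2\}$. That is \emph{not} the form $\InvGaussRV\{\alpha(t_1+t_2),\,\alpha(t_1^2+t_2^2)\}$ printed in the lemma, and the two disagree whenever $t_1t_2>0$ (e.g., for $\alpha=t_1=t_2=1$ the variance of $X_1+X_2$ is $2$, consistent with $\lambda=4=\alpha(t_1+t_2)^2$, not $\lambda=2$). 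Your proof is right and the printed second parameter appears to be a misprint; say so explicitly rather than asserting that the result matches "the asserted $\InvGaussRV$ form."
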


Both of these results follow from straightforward applications of the \CS{} 
transformation. We give a proof for the Cauchy convolution identity below.

\begin{proof}
Exploiting symmetry and the Lagrange identity $(a^2 + b^2)(c^2 + d^2) = (ac+bd)^2 + (ad-bc)^2,$ leads to the convolution density
\begin{align*} 
  f_Z(z) &= 2 \int_{0}^{\infty} 
  \frac{1}{ \pi w_1 (1+ x^2/w_1^2)} \frac{1}{\pi w_2 \{1+ (z-x)^2 / w_2^2 \} } dx
  \\
  & = \frac{2}{\pi^2 w_1 w_2} \int_{0}^{\infty} 
  \frac{1}{\{1+ w_1^{-1} w_2^{-1} x (z-x) \}^2 + \{w_2^{-1}z - (w_1^{-1}+ w_2^{-1}) x \}^2 } dx.
\end{align*}
Transforming $x$ to $x + w_2^{-1}z (w_1^{-1} + w_2^{-1})^{-1}$ and letting $a = 1 + z^2(w_1+w_2)^{-2}$, $b =(w_1 w_2)^{-1}$, $c = z (w_2-w_1) \{(w_1+w_2) w_1 w_2\}^{-1}$, $d = z (w_2-w_1)\{(w_1+w_2) w_1 w_2\}^{-1}$ gives
\begin{align*}
  f_Z(z) &= \frac{2}{\pi^2 w_1 w_2} \int_{0}^{\infty} 
  \left[ 
    \left\{ 1 + \frac{z^2}{(w_1+w_2)^2} - \frac{x^2}{w_1w_2} + 
      xz \frac{w_2-w_1}{(w_1+w_2) w_1 w_2} \right\}^2 + 
    x^2 \left(\frac{w_1 + w_2}{w_1w_2} \right)^2 
  \right]^{-1} dx 
  \\
  &= \frac{2}{\pi^2 w_1 w_2} \int_{0}^{\infty} 
  \frac{dx}{\left( a - b x^2 + cx \right)^2 + x^2 d^2} 
  = \frac{2}{\pi^2 w_1 w_2} \int_{0}^{\infty} 
  \frac{dx/x^2}{\left(a/x - bx + c \right)^2 + d^2 }.
\end{align*}
If we let $y = x^{-1}$ and apply the \CS{} transformation, we arrive at 
\[
  f_Z(z) = \frac{2}{\pi w_1 w_2} \int_{0}^{\infty} \frac{dy}{2a (y^2 + d^2)} 
  = \frac{1}{\pi w_1 w_2} \frac{1}{ad}= \frac{1}{\pi (w_1+w_2)} \frac{1}{1+ z^2/(w_1+w_2)^2}.
\]
A simple induction argument proves that the sum of any number of independent
Cauchy random variates is also another Cauchy.
\end{proof}

One can also use the characteristic function of $X \sim \CauchyRV(\mu, \sigma)$, $\psi_X(t) = \exp(it \mu - |t| \sigma^2)$, and the relation $\psi_{X+Y}(t) = \psi_X(t) \psi_Y(t)$ to derive the result in just one step. For $X = \sum_{i=1}^{p} \omega_i C_i$ and $C_i \sim \CauchyRV(0,1)$, when $\sum_{i=1}^{p} \omega_i = 1$ we have 
$\phi_X(t) = \exp\left(-\sum_{i=1}^{p}\omega_i |t|\right) = \exp(-|t|) = \phi_C(t),$ where $C \sim \CauchyRV(0, 1)$. 

The most general result in this category is due to \cite{pillai2015unexpected},
who they showed the following: 
Let $(X_1,\ldots,X_m)$ and $(Y_1, \ldots, Y_m)$ be independent and
identically distributed $\NormRV(0, \Sigma)$ for an arbitrary
positive definite matrix $\Sigma$, then 
$Z = \sum_{j=1}^{m} w_j X_j / Y_j \sim \CauchyRV(0, 1)$, 
as long as $(w_1, \ldots, w_m)$ is independent of $(X, Y)$,
$w_j \geq 0\ (j = 1, \ldots, m)$ and $\sum_{j=1}^{m} w_j = 1$. 

\section{Discussion}
\label{sec:discussion}

The \CS{} and Liouville transformations not only guarantee an simple normalizing constant for $f(\cdot)$, it also establishes the wide class of unimodal densities as global-local scale mixtures. Global-local scale mixtures that are conditionally Gaussian hold a special place in statistical modeling and can be rapidly fit using an expectation-maximization algorithm, as
pointed out by \cite{polson_data_2013}. \cite{palmer_amica:_2011} provides a similar tool for modeling multivariate dependence by writing general non-Gaussian multivariate densities as multivariate Gaussian scale mixtures. 

We end our paper with conjectures that two other remarkable identities arise as corollaries of such transformation identities. The first one is a recent result by \cite{zhang2014uniform} that proves a uniform correlation mixture of a bivariate Gaussian density with unit variance is a function of the maximum norm: 
\begin{equation}
  \int_{-1}^{1} \frac{1}{4 \pi (1-\rho^2)^{1/2} } 
  \exp\left\{ - \frac{x_1^2 + x_2^2 - 2 \rho x_1 x_2}{2 (1-\rho^2)} \right\} d\rho = 
  \half \left\{1- \Phi(\vectornorm{x}_{\infty})\right\} 
  \;, 
  \label{eq:bivar}
\end{equation}
where $\Phi(\cdot)$ is the standard normal distribution function and $\vectornorm{x}_{\infty} = \max\{ x_1, x_2\}$. The bivariate density on the
right side of \eqref{eq:bivar} was introduced by \citet{bryson1982constructing} as uniform mixtures of a chi random variate with 3 degrees of freedom, but the representation as a uniform correlation mixture is a new find.  We make a few remarks connected to the Erdelyi's integral identity, which is key to the proof of the uniform correlation mixture of \eqref{eq:bivar}. 
\begin{lemma}
Erdelyi's identity, defined by
\begin{equation}
  \int_{1/2}^{\infty} \frac{e^{-x^2 z}}{4 \pi z 	(2z-1)^{1/2}} dz = \half \left\{1-\Phi(x)\right\}, \quad x \geq 0, \label{eq:erdelyi}
\end{equation}
follows from the Laplace transformation $(1+u)^{-1} = \int_0^{\infty} \exp\{-v(1+u)\} dv$. 
\end{lemma}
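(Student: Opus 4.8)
The plan is to prove the identity by rewriting both sides as the Gaussian upper-tail integral and matching them. On the right I would note that $\half\{1-\Phi(x)\} = \frac{1}{2(2\pi)^{1/2}}\int_x^\infty e^{-t^2/2}\,dt$, so the target is an explicit tail integral. For the left I would first clear the factor $(2z-1)^{1/2}$ by the substitution $w = 2z-1$, which sends the lower limit $z=1/2$ to $w=0$, $dz = \half\,dw$, $4\pi z = 2\pi(1+w)$, and $e^{-x^2 z} = e^{-x^2/2}e^{-x^2 w/2}$, giving
\[
\int_{1/2}^\infty \frac{e^{-x^2 z}}{4\pi z (2z-1)^{1/2}}\,dz = \frac{e^{-x^2/2}}{4\pi}\int_0^\infty \frac{e^{-x^2 w/2}}{(1+w)\,w^{1/2}}\,dw.
\]
This isolates the factor $(1+w)^{-1}$, which is exactly what the stated Laplace representation is designed to remove.

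Next I would substitute $(1+w)^{-1} = \int_0^\infty e^{-v(1+w)}\,dv$ and interchange the order of integration; since every factor is nonnegative, Tonelli's theorem justifies the swap. The inner integral in $w$ then becomes a Gamma integral,
\[
\int_0^\infty w^{-1/2}\, e^{-(x^2/2+v)w}\,dw = \Gamma(\tfrac12)\,(x^2/2+v)^{-1/2} = \pi^{1/2}\,(x^2/2+v)^{-1/2},
\]
leaving the single integral $\frac{e^{-x^2/2}}{4\pi^{1/2}}\int_0^\infty e^{-v}(x^2/2+v)^{-1/2}\,dv$ in the variable $v$.

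Finally I would evaluate this last integral by the change of variables $t^2/2 = x^2/2 + v$, so that $t$ runs from $x$ to $\infty$ (using $x\geq 0$), $dv = t\,dt$, $(x^2/2+v)^{1/2} = t/2^{1/2}$, and $e^{-v} = e^{x^2/2}e^{-t^2/2}$; this converts it into $2^{1/2}e^{x^2/2}\int_x^\infty e^{-t^2/2}\,dt$. Collecting the prefactor $e^{-x^2/2}/(4\pi^{1/2})$ with this $2^{1/2}e^{x^2/2}$, the exponentials cancel and the constant reduces to $2^{1/2}/(4\pi^{1/2}) = 1/\{2(2\pi)^{1/2}\}$, which is precisely the Gaussian-tail form of $\half\{1-\Phi(x)\}$ recorded above, completing the proof. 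The only genuinely delicate points are the bookkeeping of constants across the three substitutions and the justification of the Fubini--Tonelli interchange; I expect no deeper obstacle, since each individual step is a standard change of variables or a Gamma evaluation.
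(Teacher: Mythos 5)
Your proof is correct and follows essentially the same route as the paper's: the substitution $w=2z-1$, the Laplace representation of $(1+w)^{-1}$ with a Tonelli interchange, the $\Gamma(\tfrac12)$ integral in $w$, and a final quadratic change of variables to reach the Gaussian tail (the paper writes this last step as $z^2=x^2+2v$, which is identical to your $t^2/2=x^2/2+v$). The only cosmetic difference is that you factor out $e^{-x^2/2}$ immediately after the first substitution, while the paper carries the combined exponent $(x^2/2+v)(1+u)$ through the double integral.
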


\begin{proof}
  Apply the transform $u = 2z-1$ to the left hand side of \eqref{eq:erdelyi}, denoted by $I$, to obtain 
  \[
  I = \int_{0}^{\infty} \frac{e^{-x^2/\{2(1+u)\}}}{4 \pi {u}^{1/2} (1+u)} du \;.
  \]
  Using the Laplace transformation $(1+u)^{-1} = \int_0^{\infty} e^{-v(1+u)} dv$ yields
  \begin{align*}
    I &= \int_{0}^{\infty} \frac{e^{-x^2/\{2(1+u)\}}}{4 \pi {u}^{1/2}} 
    \int_0^{\infty} e^{-v(1+u)} dv du 
    = \int_{v= 0}^{\infty} \int_{u=0}^{\infty} 
    \frac{e^{-({x^2}/{2} + v)(1+u)}}{4 \pi {u}^{1/2}} dv du
    \\
    &= \int_{v= 0}^{\infty} \frac{1}{4\pi} e^{-({x^2}/{2} + v)} 
    \int_{u=0}^{\infty} u^{-1/2} e^{-({x^2}/{2} + v) u} du dv 
    = \int_{v= 0}^{\infty} \frac{e^{-(x^2 + 2v)/2}}{2 (2\pi)^{1/2}} 
    \frac{1}{(x^2+ 2v)^{1/2} } dv
    \;,
  \end{align*}
  and letting $z^2 = x^2 + 2v$ we get 
  \begin{equation*}
    I = \half \int_{z = |x|}^{\infty} \frac{1}{(2\pi)^{1/2}} e^{-z^2/2} dz 
    = \half \left\{1 - \Phi(|x|)\right\} \;.
  \end{equation*} 
\end{proof}



The second candidate is the symmetric stable distribution, defined by its characteristic function $\phi(t) = \exp( -|t|^{\alpha}), 0 < \alpha \leq 2$. It admits a normal scale mixture representation with mixing density as $f(v) = 2^{-1} s_{\alpha/2}(v/2), v > 0$, where $s_{\alpha/2}$ is the positive stable density with index $\alpha / 2$ \citep{gneiting1997normal}. The exponential power density arising as a dual of the symmetric stable density also has a normal scale mixture representation with important application in
Bayesian bridge regression \citep{polson_bayesian_2014}.
\[
e^{-|x|^\alpha} = \int_0^{\infty} e^{-x\eta} g(\eta) d\eta, \quad g(\eta) = \sum_{j=1}^{\infty} (-1)^j \frac{\eta^{-j \alpha-1}}{j! \Gamma(-\alpha j)}
  \;,
\]
\citet{polson_bayesian_2014} derive this as a limiting result of the scale-mixture of beta representation for $k$-montone densities and utilizing the complete monotonicity of exponential power density. Regularization, in this case, is an outcome of a normal scale mixture with respect to an $\alpha$-stable random variable.  We conjecture that these two results follow from the \CS{} formula \eqref{eq:identity}. Other potential applications include using Liouville formula to recognize and generate global-local mixtures, and to calculate higher-order closed-form moments $E(X^n)$ for random variables $X$ that admit a global-local representation. 

\bibliographystyle{biometrika}
\bibliography{glref}

\end{document}